\theoremstyle{plain}
\newtheorem{theorem}{Theorem}[section]
\newtheorem{corollary}[theorem]{Corollary}
\newtheorem{lemma}[theorem]{Lemma}
\newtheorem{proposition}[theorem]{Proposition}
\theoremstyle{definition}
\newtheorem{definition}[theorem]{Definition}
\theoremstyle{remark}
\newtheorem{example}{Example}[section]
\numberwithin{equation}{section}
\numberwithin{figure}{section}
\newcommand{\cG}{\mathcal{G}}
\newcommand{\cR}{{\mathcal R}}
\newcommand{\cO}{{\mathcal O}}
\newcommand{\CC}{{\mathbb C}}
\newcommand{\HH}{{\mathbb H}}
\newcommand{\PP}{{\mathbb P}}
\newcommand{\RR}{{\mathbb R}}
\newcommand{\ZZ}{{\mathbb Z}}
\renewcommand{\a}{\alpha}
\renewcommand{\b}{\beta}
\renewcommand{\c}{\gamma}
\renewcommand{\d}{\partial}
\newcommand{\<}{\langle}
\renewcommand{\>}{\rangle}
\begin{document}

\title[JNR monopoles]{JNR monopoles}
\author{Michael K. Murray}
\address[Michael K. Murray]
{School of Mathematical Sciences\\
University of Adelaide\\
Adelaide, SA 5005 \\
Australia}
\email[Michael K. Murray]{michael.murray@adelaide.edu.au}
\author{Paul Norbury}
\address[Paul Norbury]
{School of Mathematics and Statistics \\
University of Melbourne\\
Vic 3010\\
Australia}
\email[Paul Norbury]{norbury@unimelb.edu.au}

\thanks{MKM acknowledges support under the  Australian
Research Council's {\sl Discovery Projects} funding scheme (project number DP180100383).  PN acknowledges support under the  Australian
Research Council's {\sl Discovery Projects} funding scheme (project numbers DP170102028 and DP180103891).
}

\subjclass[2010]{81T13, 53C07, 14D21}
%

%	81T13  	Yang-Mills and other gauge theories [See also 53C07, 58E15]
%	53C07  	Special connections and metrics on vector bundles (Hermite-Einstein-Yang-Mills) [See also 32Q20]	
%	14D21  	Applications of vector bundles and moduli spaces in mathematical physics (twistor theory, instantons, quantum field theory) [See also 32L25, 81Txx]

\begin{abstract}
We review the theory of  JNR, mass $\frac{1}{2}$ hyperbolic monopoles in particular
their spectral curves and rational maps.  These are used to establish conditions 
for a spectral curve to be the spectral curve of a  JNR monopole and to show that that  rational map of a JNR monopole 
monopole
arises by scattering using results of Atiyah. We show that for JNR monopoles the 
holomorphic sphere has a remarkably simple form and show that this can be used
to give a formula for the energy density at infinity.  In conclusion we illustrate
some examples of the energy-density at infinity of JNR monopoles.
\end{abstract}

\maketitle

\dedicatory{
\begin{center}
Dedicated to the memory of Sir Michael Atiyah
\end{center} 
}

%:Introduction

\tableofcontents

\section{Introduction}

The first author was introduced to monopoles by Sir Michael Atiyah in 1980 when  Atiyah proposed
that generalising Nigel Hitchin's then recent article \cite{Hit} from $SU(2)$ to an arbitrary 
compact Lie group would make a good DPhil project.  After some years of expert supervision by Atiyah
this proved to be the case.  

In 1989  Atiyah came to the Australian National University to attend
 a Miniconference on Geometry and Physics \cite{Ati89}.  He talked to Rodney Baxter about his work with Perk and Au-Yang 
 on the chiral Potts model.  His interest was particularly in the fact that their new solutions of the chiral Potts
 model \cite{BaxPerAuY} involved algebraic curves of genus greater than  $1$.  This led to him making what  he later
 told the second author was a `flight of fancy' that this 
 class of curves might be related to the spectral curves of monopoles, another interesting class of curves and that
 the chiral Potts integrable model solutions might also be related  to monopoles and instantons and the
 whole question of self-duality and integrability \cite{MasWoo}. 
 
  In 1990 the first author visited
 Atiyah at Oxford and we resolved the first part of this conjecture identifying the Baxter, Perk, Au-Yang curves
 with particular spectral curves of zero mass hyperbolic monopoles \cite{Ati91, AtiMur}. Hyperbolic monopoles
 had been introduced earlier by Atiyah in \cite{Ati}. However finding a relationship 
 between monopoles and the actual solutions of the chiral Potts model has remained elusive. We discussed 
 it a number of time over the years in various locations including the Master's Lodge at Trinity College and at various times
 in Edinburgh. Part of the difficulty 
 would seem to be  understanding the significance of the mass zero hyperbolic monopoles which are probably better thought of 
 as limits of positive mass hyperbolic monopoles\footnote{See for example \cite{MurNorSin}
 for further discussion of this point.}.
  
Trying  to resolve Atiyah's  conjecture has motivated a number of interesting explorations of hyperbolic monopoles by the 
 current two authors and their colleagues \cite{MurSin, MurNorSin, NorRom, MurSin00}.  While it would 
 be particularly nice for this volume  to say we were about to present the resolution of this conjecture
 that is sadly not the case.  We present instead a discussion of some interesting results involving
 mass $\frac{1}{2}$ hyperbolic monopoles.  It seemed appropriate however in the current circumstances to explain the deeper underlying motivation 
 for our interest in hyperbolic monopoles.

Monopoles are solutions of the  Bogomolny equations which arise as the dimensional reduction of time-invariant instantons on $\RR^4$. 
In \cite{Ati} Atiyah noticed that by exploiting the conformal invariance of the self-duality equations 
one can pass from instantons on $S^4 - S^2$ to instantons on $\HH^3 \times S^1$ with metric the product
of the hyperbolic metric and the circle metric and then by demanding rotation invariance in the $S^1$ direction 
to solutions of the Bogomolny equations on hyperbolic space $\HH^3$.  As in the Euclidean case
appropriate boundary conditions can be imposed resulting in the definition of the mass and charge 
of the monopole.  One finds that the mass is in $\frac{1}{2} \ZZ$
if and only if under the conformal transformation above the instanton on $S^4 - S^2$ extends to an instanton 
on $S^4$. In \cite{ManSut} Manton and Sutcliffe showed how to 
construct particular mass  $\frac{1}{2}$ monopoles known as JNR monopoles in this way using  an ansatze for instantons due
to Jackiw, Nohl and Rebbi \cite{JacNohReb},
the formula for which also occurred in \cite{CorFai}.  These instantons were early examples of 
what later became known as the Atiyah, Drinfeld, Hitchin, Manin (ADHM) construction \cite{AtiHitDri}. In \cite{ManSut} Manton and Sutcliffe construct JNR monopoles
with various symmetries including: spherical, axial, tetrahedral, octahedral and icosohedral.  
Following on this work \cite{BolCocSut, Coc} give formulae for the spectral curve and the rational 
map of the general JNR monopole and for these particular symmetric cases.  What is quite remarkable about JNR monopoles
is how simple and explicit the formulae are for the spectral curves and rational maps, not just in the symmetric cases but 
also for the general JNR monopole. 

Using these results in Section \ref{sec:grid} we  make a number of additions to this theory.  We give a general geometric constraint that characterises the spectral curves of JNR
monopoles. This is the existence of a so-called grid.  We show that this can be used to define the section of the line bundle $L^{2p+N}$  the existence of which is one of the
important properties of
a spectral curve of a mass $p$ charge $N$ monopole satisfies.  In turn we show that this section can be used as scattering data to define the rational map 
as in the Euclidean case.  

We represent the monopole in terms of holomorphic, or twistor, data rather than writing down the explicit fields, for most of the paper.  The  energy density of a monopole is the $L^2$ norm of its curvature which gives a type of location of the fields of the monopole.   Moreover, a hyperbolic monopole is uniquely determined by the limit at infinity of its energy density.  In Section~\ref{sec:energy} we determine the energy density of the JNR monopole at infinity.  To achieve this we use the  holomorphic sphere of the  monopole, described in Section~\ref{sec:holsph}, which was constructed for hyperbolic monopoles of any mass  in work of the current authors with Singer \cite{MurNorSin}. This is a 
holomorphic map $q \colon \PP_1 \to \PP_N$ determined up to the action of $U(N+1)$. One satisfying aspect of this paper is a simple formula for the holomorphic sphere 
in terms of the JNR data given by Proposition \ref{prop:holo-sphere}, which leads to a formula for the energy density of the monopole at infinity.   In Section~\ref{sec:sphrat} we discuss the relationship between the holomorphic sphere and the different rational maps obtained by scattering from different points at infinity.

\section{Hyperbolic monopoles}

A monopole in Euclidean, respectively hyperbolic, space is a pair $(A,\Phi)$ consisting of a connection $A$ with
$L^2$ curvature $F_A$ defined on a trivial bundle $E$ over $\RR^3$, respectively $\HH^3$,
with structure group $SU(2)$, and a Higgs field
$\Phi:\RR^3 (\HH^3) \rightarrow{\bf su}(2)$ that solves the Bogomolny equation
\begin{equation} \label{eq:bog}
d_A\Phi=*F_A
\end{equation} 
and satisfies a boundary condition which we 
do not need to consider in detail but which has two importance consequences:
\begin{itemize}
\item There is a limit $\displaystyle\lim_{r\rightarrow\infty}||\Phi||=p$, known as
the {\em mass} of the monopole.  
\item There is a well-defined map on the two-sphere at infinity $\Phi_{\infty}:S^2\rightarrow S^2$ whose 
topological degree is called the {\em charge} of the monopole. 
\end{itemize}

The Euclidean and hyperbolic metrics appear in  the Bogomolny equation \eqref{eq:bog} via the Hodge star operator $*$.  Many features of Euclidean and hyperbolic monopoles behave in a similar fashion.  However the mass is quite different. 
In the Euclidean case we can transform any monopole of non-zero mass to any other of non-zero mass by 
a suitable rescaling of space and the fields.  In the hyperbolic case we can perform the same
procedure but after rescaling hyperbolic space by dilation the curvature of the metric changes.  So
if we fix a standard copy of $\HH^3$ of scalar curvature $-1$ the monopoles of different masses
must be considered as different.  

We will be concerned primarily with the twistor picture of hyperbolic monopoles for which we need the mini-twistor
space \cite{Hit} of all oriented geodesics of $\HH^3$. Recall that a geodesic  in $\HH^3$ is determined by the 
two points where it meets the sphere at infinity. Thus the mini-twistor space is the complex manifold
\[ Q=\PP^1\times\PP^1-\bar{\Delta}\] where the point
$(\eta,\zeta)\in\PP^1\times\PP^1$ represents the geodesic that runs from
$\hat{\eta}=-1/\bar{\eta}$, the antipodal point of $\eta$, to $\zeta$ considered as
points on the sphere at infinity.  The antidiagonal $\bar{\Delta}$ has
been removed as it 
represents a geodesic from $z$ to itself.

The monopole $(A, \Phi)$ (up to gauge transformation) is determined by an algebraic curve $S \subset Q$ called the {\em spectral curve}
of the monopole. See \cite{Hit} for the Euclidean case and Atiyah's foundational work \cite{Ati} for the hyperbolic case of half-integral mass. 
The spectral curve is the collection of all oriented geodesics $\gamma$ along which the equation 
$$
(\nabla_\gamma - i \Phi ) s = 0
$$
has $L^2$ solutions.   The spectral curve of a hyperbolic monopole of charge $N$ and mass $p$ satisfies a number of conditions:

\begin{itemize}
\item It is compact of bi-degree $(N, N)$.  Hence determined by a holomorphic section of the bundle $\cO(N, N)$.  
\item It is {\em real}, that is fixed by the anti-holomomorphic involution $\tau$ which reverses the orientation 
of the geodesic.
\item There is a non-vanishing holomorphic section of $L^{2p +N}$ over $S$ where $L = \cO(1, -1)$. 
\end{itemize}

The moduli space of charge $N$ monopoles $(A, \Phi)$  modulo gauge transformations is known to be a manifold of dimension $4N - 1$
independent of the mass of the monopole.  The structure of this moduli space was proven by Atiyah \cite{Ati84} to be diffeomorphic to the space of 
based rational maps from $\PP_1$ to $\PP_1$.   Atiyah proved this in  \cite{Ati84} as the $S^1$ invariant case of the more general theorem that instantons with structure group $G$ on $S^4$ are in one-to-one correspondence with based holomorphic maps from $\PP_1$ to the loop group $\Omega G$.  Circle invariant instantons, equivalently half-integer mass hyperbolic monopoles, correspond to maps from $\PP_1$ to conjugacy classes of homomorphisms from $S^1$ to the $G$ inside $\Omega G$ which are homogeneous spaces $G/P$ of $G$.   For $SU(2)$ instantons, the homogeneous space is $SU(2)/U(1)=\PP_1$, and the moduli space of charge $N$, mass $m\in\frac12\ZZ$ hyperbolic monopoles modulo gauge transformations that are the identity at $\infty$ is homeomorphic to degree $N$ based maps from $\PP_1$ to $\PP_1$.  This space has dimension $4N$, and when we allow all gauge transformations, the rational map is well-defined up to rescaling by a factor in $U(1)$ and the dimension is $4N-1$.
In both the Euclidean and hyperbolic cases it is difficult to write down monopoles $(A, \Phi)$ or their spectral data or rational map except 
in highly symmetric cases. 

\section{JNR monopoles}

\begin{definition} 
{\sl JNR data} consists of a collection of $N+1$ positive numbers $\lambda_i > 0$ called {\sl weights} and $N+1$ distinct complex numbers  $\gamma_i$ called {\em poles}.
\end{definition}
As explained in \cite{ManSut} each collection of JNR data determines a hyperbolic monopole of mass $\frac{1}{2}$ and charge $N$. If 
we rescale the weights we obtain a gauge equivalent monopole. 
We  might expect the JNR monopoles to be a submanifold of dimension $3N+2 $ inside the $4 N - 1 $ dimensional 
moduli space of all monopoles of charge $N$.  As discussed in \cite[Section 2]{BolCocSut} this is true except for some low $N$
with the result that for $N = 1, 2, 3$ all monopoles are obtained as JNR monopoles.  There seems to be no clear geometric picture of which 
monopoles are JNR monopoles for $N > 3$. 

\subsection{JNR spectral curve}
It is shown in \cite{BolCocSut} that the spectral curve of a JNR monopole is given in terms of its JNR data $\{\lambda_i,\gamma_i\mid i=0,...,N\}$ by the equation
\begin{equation}
\label{eq:jnr-spectral}
0 = p(\eta, \zeta) = \sum_{i=0}^N  \lambda_i^2  \prod_{\substack{j= 0 \\ j \neq i}}^N  (\zeta - \gamma_j ) ( 1 + \eta \bar \gamma_j) .
\end{equation}
 Notice that, as expected from the remarks above, rescaling the weights leaves the spectral curve unchanged.

It is simple to check the basic conditions satisfied by a JNR spectral curve.  First notice that 
\begin{equation}
\label{eq:jnr-norm1}
p(\eta, \zeta) = (-1)^N \eta^N \zeta^N \overline{p(\frac{-1}{\bar \zeta}, \frac{-1}{\bar \eta})} 
\end{equation}
which shows that the spectral curve is real. Moreover
\begin{align*}
 p( \frac{-1}{\bar \zeta}, \zeta) &=  \sum_{i=0}^N  \lambda_i^2  \prod_{\substack{j= 0 \\ j \neq i}}^N  (\zeta - \gamma_j ) ( 1 +  \frac{-1}{\bar \zeta} \bar \gamma_j) \\
  &=  \frac{1}{\bar \zeta^N} \sum_{i=0}^N  \lambda_i^2  \prod_{\substack{j= 0 \\ j \neq i}}^N  |\zeta - \gamma_j |^2\\
  & \neq 0
 \end{align*}
for any $\zeta$ so the spectral curve does not intersect the anti-diagonal. Hence it is a compact subset of $Q$.

If $S$ is a spectral curve of a mass $\frac{1}{2}$ monopole it must have a non-vanishing
%we expect it to have   
section $s$ of 
$$
L^{2\frac{1}{2} + N} = \cO(N+1 , -N - 1).
$$
We construct this by restricting the global meromorphic section
\begin{equation}
\label{eq:section}
s(\eta, \zeta) = \frac{ \prod_{i=0}^N (\zeta - \gamma_i ) }{ \prod_{i=0}^N (1 + \eta \bar\gamma_i ) } 
\end{equation}
of $\cO(N+1 , -N - 1)$.  

\begin{proposition}
The section $s$ defined in equation \eqref{eq:section} is non-vanishing on the spectral curve arising from JNR data.
\end{proposition}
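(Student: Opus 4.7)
The plan is to interpret $s$ as a meromorphic section of the relevant line bundle on $Q$, with zero divisor $\sum_{k=0}^{N}\{\zeta=\gamma_k\}$ coming from the numerator of \eqref{eq:section} and pole divisor $\sum_{k=0}^{N}\{\eta=-1/\bar\gamma_k\}$ coming from the denominator. I would then show that these two divisors cut out the same effective divisor on $S$ with matching multiplicities, so that $s|_S$ has trivial divisor and is therefore nowhere-vanishing.

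To locate the intersections, substitute $\zeta=\gamma_k$ into \eqref{eq:jnr-spectral}: every term with $i\neq k$ contains, via the $j=k$ factor in its product, the vanishing quantity $(\gamma_k-\gamma_k)$, so only the $i=k$ term survives and gives
\[
p(\eta,\gamma_k)=\lambda_k^2\prod_{j\neq k}(\gamma_k-\gamma_j)(1+\eta\bar\gamma_j).
\]
Since the $\gamma_j$ are distinct, this is a polynomial in $\eta$ of degree $N$ with $N$ simple roots at $\eta=-1/\bar\gamma_j$, $j\neq k$. The symmetric substitution $\eta=-1/\bar\gamma_k$ yields
\[
p(-1/\bar\gamma_k,\zeta)=\lambda_k^2\prod_{j\neq k}(\zeta-\gamma_j)(1-\bar\gamma_j/\bar\gamma_k),
\]
a polynomial in $\zeta$ of degree $N$ with $N$ simple roots at $\zeta=\gamma_j$, $j\neq k$. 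Hence the zero and pole loci of $s$ on $S$ coincide as the common set $\{(-1/\bar\gamma_j,\gamma_k):0\le j\neq k\le N\}$ of $N(N+1)$ points, and at each such point both intersections with $S$ are transverse, of multiplicity one. The multiplicities therefore match on $S$, the divisor of $s|_S$ is trivial, and $s$ is nowhere-vanishing on $S$.

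The main subtlety is the case in which some pole $\gamma_k=0$, where the divisor $\{\eta=-1/\bar\gamma_k\}$ degenerates to $\{\eta=\infty\}$; here one passes to the other affine chart (with coordinate $\tilde\eta=1/\eta$) and verifies that the analogous transversality computation still goes through. Otherwise no further difficulties arise at infinity on $Q$, and $S$ is already known to avoid the removed antipodal diagonal $\bar\Delta$ by the calculation displayed just before the proposition, so the argument is complete.
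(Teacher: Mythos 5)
Your proof is correct, and it reaches the paper's conclusion by a somewhat different route. The paper also reduces to the grid points $(-1/\bar\gamma_j,\gamma_i)$, $i\neq j$, but then argues purely locally: it writes $s=\sigma\,(\zeta-\gamma_i)/(1+\eta\bar\gamma_j)$ with $\sigma$ a unit, expands $p=\alpha(\zeta-\gamma_i)+\beta(1+\eta\bar\gamma_j)+\gamma(\zeta-\gamma_i)(1+\eta\bar\gamma_j)$ with $\alpha,\beta$ non-vanishing near the point, and uses $p=0$ on $S$ to solve for the ratio $(\zeta-\gamma_i)/(1+\eta\bar\gamma_j)$, exhibiting it as a unit on the curve. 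Your divisor-theoretic argument replaces that manipulation by the statement that the zero and polar divisors of $s$ cut $S$ in the same points with multiplicity one, so the divisor of $s|_S$ is trivial; note that your explicit factorisations of $p(\eta,\gamma_k)$ and $p(-1/\bar\gamma_k,\zeta)$, with their simple roots, are exactly the substance behind the paper's unproved assertion that $\alpha$ and $\beta$ are non-vanishing, since they say $\partial_\eta p\neq 0$ and $\partial_\zeta p\neq 0$ at the grid points, i.e.\ $S$ is smooth there and transverse to the two rulings. What each buys: your version makes the transversality explicit, shows in passing that $S$ contains none of the lines $\{\zeta=\gamma_k\}$ or $\{\eta=-1/\bar\gamma_k\}$ (so $s|_S$ is a genuine meromorphic section), and does the global bookkeeping including the chart degeneration when some $\gamma_k=0$; the paper's computation is more elementary and entirely local, needing no intersection multiplicities or divisor formalism. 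Both are complete once the simple-root (equivalently, unit-coefficient) facts are checked, which you do explicitly.
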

\begin{proof}
The only points $(\eta, \zeta)$ at which this might not be true are those points  where either 
of  $\zeta = \gamma_i$ or  $\eta = \frac{-1}{\bar \gamma_j}$ holds.  The only points on the 
spectral curve like that are $(\eta, \zeta) =  (\frac{-1}{\bar \gamma_j}, \gamma_i )$ with $i \neq j$. 
Fix such an $i $ and $j$.  Then on a small enough open set around this point we have 
 $$
s(\eta, \zeta) = \sigma(\eta, \zeta) \frac{ (\zeta - \gamma_i ) }{(1 + \eta \bar\gamma_j ) } 
$$
with $\sigma$ non-vanishing.  Likewise we have 
$$
p(\eta, \zeta) = \a(\eta, \zeta)( \zeta - \gamma_i) + \b(\eta, \zeta)(1 + \eta \bar \gamma_j) + \c(\eta, \zeta)( \zeta - \gamma_i)(1 + \eta \bar \gamma_j)
$$
with $\a, \b, \c$ non-vanishing.   Hence 
$$
s(\zeta, \eta)  = \frac{-\sigma}{\a} \left( \b  + \c (\zeta - \gamma_i) \right) + p \left( \frac{1}{ \a ( 1 + \eta \bar \gamma_j ) } \right)
$$
and 
$$
\frac{-\sigma}{\a} \left( \b  + \c (\zeta - \gamma_i) \right) 
$$
is non-vanishing in a neighbourhood of $(\frac{-1}{\bar \gamma_j}, \gamma_i )$.
\end{proof}

\subsection{Scattering}
\label{sec:scattering}
Each point in $\HH^3$ defines a two-sphere
of oriented geodesics through it.  As that point approaches a point $\gamma$ at infinity this two-sphere %star
of oriented geodesics approaches a pair of generators in $Q$ corresponding to the  geodesics beginning at $-1/\bar \eta = \gamma$ or ending at $\zeta = \gamma$ or the solutions to 
$$
0 = (\zeta - \gamma ) ( 1 + \eta \bar \gamma) .
$$
Notice that we can normalise $ \sum_{i=0}^N  \lambda_i^2  =1 $. Then if we let one value say $\lambda_k \to 1$ and all the others go to zero the spectral 
curve \eqref{eq:jnr-spectral} approaches the solutions of
$$
0 =   \prod_{\substack{j= 0 \\ j \neq k}}^N  (\zeta - \gamma_j ) ( 1 + \eta \bar \gamma_j) .
$$
 that correspond to the set of geodesics through the points $\gamma_0, \dots, \gamma_{k-1}, \gamma_{k+1}, \gamma_N$ at infinity in hyperbolic space.
 We can think of the monopole as approaching a monopole located at $N-1$ points near the $\gamma_0, \dots, \gamma_{k-1}, \gamma_{k+1}, \gamma_N$.
We comment again about this in Section \ref{sec:energy}.

\subsection{JNR rational map}  \label{sec:JNR-rat-map}
A remarkable result about both Euclidean and hyperbolic monopoles is that the {\em framed} moduli spaces of charge $N$ monopoles are diffeomorphic
to the space of all based rational (holomorphic) maps $\cR \colon \PP_1 \to \PP_1$.   Here based means that $\cR(\infty) = 0$.   The framed moduli space uses gauge transformations that are the identity at a chosen point on the sphere $S^2_\infty$ of directions at infinity in $\RR^2$ or $\HH^3$.  The usual moduli space is a quotient of the framed moduli space by $S^1$.    
In Cockburn's PhD thesis \cite{Coc} (see also \cite{BolCocSut}) the rational map of a JNR monopole is calculated explicitly in terms of
JNR data
as
\begin{equation}
\label{eq:JNR-rat-map}
\cR(z) = \frac{ \sum\limits_{i=0}^N \sum\limits_{j = i+1}^N \lambda_i^2 \lambda_j^2 (\gamma_i - \gamma_j)^2 \prod\limits_{\substack{k=0 \\ k \neq i, j}}^N (z - \gamma_k) }
{ \left( \sum\limits_{i=0}^N \lambda_i^2 \right) \left( \sum\limits_{j=0}^N \lambda_j^2 \prod\limits_{\substack{k=0 \\ k \neq j}}^N (z - \gamma_k)\right) }
\end{equation}
Again we note that this is unchanged if we rescale the weights. 

 We show that \eqref{eq:JNR-rat-map} can also 
be calculated by using the scattering  construction of Atiyah \cite[page 30]{Ati} (given in more detail in the Euclidean case in  \cite[page 127]{AtiHit}).

\begin{proposition} 
The rational map \eqref{eq:JNR-rat-map} for JNR monopoles agrees with the rational map Atiyah defines for hyperbolic monopoles via scattering.
\end{proposition}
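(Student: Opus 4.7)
The approach is to translate Atiyah's scattering construction into an explicit partial-fraction expression in terms of the spectral data $(p, s)$, and then verify algebraically that substituting the JNR formulae recovers \eqref{eq:JNR-rat-map}.

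First I fix a convenient base point at infinity for the scattering pencil. In the paper's conventions, the pencil of oriented geodesics starting at $\infty \in S^2_\infty$ is exactly the generator $\{\eta = 0\} \subset Q$, with each geodesic parametrised by its other endpoint $z = \zeta \in \PP_1$. By Atiyah's construction \cite{Ati}, combined with the reformulation of scattering data in terms of the section $s$ of $\cO(N+1,-N-1)$ established in Section~\ref{sec:grid}, the rational map produced by scattering has the partial-fraction form
\[
\cR_{\rm scat}(z) = \sum_{k=1}^{N} \frac{s(0,\zeta_k)}{q'(\zeta_k)\,(z-\zeta_k)},
\]
where $q(\zeta) := p(0,\zeta)$ and $\zeta_1,\ldots,\zeta_N$ are its roots, i.e.\ the spectral values of $\zeta$ along the pencil. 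Substituting the JNR formulae \eqref{eq:jnr-spectral} and \eqref{eq:section} gives
\[
q(z) = \sum_{j=0}^N \lambda_j^2 \prod_{k\neq j} (z-\gamma_k),
\qquad s(0,\zeta) = \prod_{i=0}^N (\zeta - \gamma_i),
\]
so that the denominator of $\cR_{\rm scat}$ already coincides, up to the overall scalar $\sum_i \lambda_i^2$, with that of \eqref{eq:JNR-rat-map}.

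Since both $\cR_{\rm scat}$ and \eqref{eq:JNR-rat-map} are rational functions whose simple poles are contained in $\{\zeta_1,\ldots,\zeta_N\}$ and both vanish at $\infty$, their agreement (up to an overall normalisation) reduces to showing that their residues at each $\zeta_k$ are proportional by a single constant $C$ independent of $k$. Clearing denominators, this is equivalent to the polynomial congruence
\[
\sum_{i<j} \lambda_i^2 \lambda_j^2 (\gamma_i - \gamma_j)^2 \prod_{\substack{k=0\\ k\neq i,j}}^N (z-\gamma_k)
\;\equiv\; C\,\Bigl(\sum_{i} \lambda_i^2\Bigr) \prod_{i=0}^N (z-\gamma_i) \pmod{q(z)}.
\]

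The remaining task, and the main obstacle, is to verify this congruence. The cleanest route is a residue calculation on $\PP_1$: the partial fraction expansion of $\prod_i (z-\gamma_i)/q(z)$ about the roots $\zeta_k$ of $q$ produces exactly the Lagrange interpolation sum on the left of the congruence, while the leading-order behaviour at $\zeta = \infty$ fixes the value of $C$. Alternatively one may expand both sides in powers of $z$ and match coefficients using elementary symmetric functions in $\gamma_0, \ldots, \gamma_N$; this amounts to reversing the derivation of \eqref{eq:JNR-rat-map} in \cite[Section~2]{BolCocSut} and \cite{Coc}. A direct check in the case $N=1$ yields $C = -1$, which reflects the sign convention implicit in Atiyah's choice of framing at infinity and does not affect $\cR$ as an element of the based moduli space.
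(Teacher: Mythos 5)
Your setup coincides with the paper's: scattering from $\infty$ uses the pencil $\{\eta=0\}$, the denominator is $Q(z)=p(0,z)$, and the numerator is the unique polynomial of degree at most $N-1$ congruent to $s(0,z)=\prod_i(z-\gamma_i)$ modulo $Q$; your partial-fraction sum $\sum_k s(0,\zeta_k)/\bigl(q'(\zeta_k)(z-\zeta_k)\bigr)$ is just the Lagrange-interpolation form of that remainder (and, as written, silently assumes the roots $\zeta_k$ are simple, which the remainder formulation $P=s(0,\cdot)+(az+b)Q$ avoids). The genuine gap is that the heart of the proof is missing: you correctly reduce the proposition to the congruence between the JNR numerator and $C\,(\sum_i\lambda_i^2)\prod_i(z-\gamma_i)$ modulo $q$, but then only describe possible routes (a residue calculation, coefficient matching, or ``reversing the derivation'' in \cite{BolCocSut}) and verify the case $N=1$. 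Asserting that the residue expansion ``produces exactly'' the left-hand side is precisely the identity that has to be proved, fixing $C$ from behaviour at infinity is meaningful only after proportionality of all $N$ residues by a $k$-independent constant is established, and deferring to \cite{BolCocSut,Coc} run backwards is not a proof. The paper closes exactly this gap by a short direct computation: normalise $\sum_i\lambda_i^2=1$, find $a=-1$, $b=\sum_k\lambda_k^2\gamma_k$, insert $1=\sum_k\lambda_k^2$ to write $P(z)=\sum_{i,k}\lambda_i^2\lambda_k^2\bigl[\prod_j(z-\gamma_j)-(z-\gamma_k)\prod_{j\neq i}(z-\gamma_j)\bigr]=\sum_{i,k}\lambda_i^2\lambda_k^2(\gamma_k-\gamma_i)\prod_{j\neq i}(z-\gamma_j)$, and then symmetrise over pairs $i<k$ to land on $\sum_{i<k}\lambda_i^2\lambda_k^2(\gamma_i-\gamma_k)^2\prod_{j\neq i,k}(z-\gamma_j)$ up to sign. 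You need to carry out this (or an equivalent) computation for general $N$; without it your argument is a plan, not a proof.

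On the sign: your observation that $C=-1$ is in fact what the algebra gives (the paper's final displayed line drops a sign, and also contains typos with $\lambda_k$ in place of $\gamma_k$), and it is harmless because once all gauge transformations are allowed the rational map is only defined up to a $U(1)$ factor, of which $-1$ is an element. Be careful with the justification, though: in the framed (based) moduli space the rational map is fixed exactly, and the $U(1)$ ambiguity appears only in the unframed quotient, so saying the sign ``does not affect $\cR$ as an element of the based moduli space'' has the roles of the two moduli spaces reversed. A minor further point: the section $s$ is introduced with the JNR spectral curve in equation \eqref{eq:section}, not in Section \ref{sec:grid}.
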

\begin{proof}
 To simplify the calculation let $\sum_{i=0}^N \lambda_i^2= 1$. Then following Atiyah, the denominator of the rational map is
$$
Q(z) =  p(0, z) = \sum_{i = 0}^N \lambda_i^2 \prod_{\substack{j= 0 \\ j \neq i}}^N  (z - \gamma_j ).
$$
The numerator  $P(z)$ is the polynomial of degree $N-1$ satisfying
\begin{equation}
\label{eq:p-scattering}
P(z) = s(0, z) + (a z + b) Q(z)
\end{equation}
where $s$ is the section of $L^{m+1}$ defined in \eqref{eq:section}.
The scattering  rational map is $P(z)/Q(z)$.  We want to show that this 
agrees with $\cR(z)$.  Notice that $Q(z) $ is the denominator of \eqref{eq:JNR-rat-map}.

First we calculate that 
$$
P(z) = \prod_{i=0}^N (z - \gamma_i)^N + (a z + b ) \sum_{i = 0}^N \lambda_i^2 \prod_{\substack{j= 0 \\ j \neq i}}^N  (z - \gamma_j )
$$
and choosing $a$ and $b$ to remove the terms of order $z^{N+1}$ and $z^N$ we find that  $a = -1$ and $b =  \sum_{i = 0}^N \lambda_i^2 \gamma_i$.
Substituting this back in we obtain 

\begin{align*}
P(z) &=   \prod_{i=0}^N (z- \gamma_i )  + (-z + \sum_{k = 0}^N \lambda_k^2 \gamma_k) \sum_{i = 0}^N \lambda_i^2 \prod_{\substack{j= 0 \\ j \neq i}}^N  (z - \gamma_j ) \\
& = \sum_{i, k=0}^N \lambda_i^2 
\lambda_k^2  \left( \prod_{j=0}^N (z- \gamma_j )  - (z - \lambda_k) \prod_{\substack{j= 0 \\ j \neq i}}^N  (z - \gamma_j ) \right)\\
& = \sum_{i, k=0}^N \lambda_i^2 
\lambda_k^2  \left( 1  - \frac{(z - \lambda_k) }{(z - \gamma_i)}   \right)  \prod_{j=0}^N (z - \gamma_j)\\
& = \sum_{i, k=0}^N \lambda_i^2 \lambda_k^2  \left(  
\frac{(\lambda_k - \lambda_i) }{(z - \gamma_i)}   \right)      \prod_{j=0}^N (z - \gamma_j)\\
& = \sum_{i < k=0}^N \lambda_i^2 \lambda_k^2  \left(  
\frac{(\lambda_k - \lambda_i) }{(z - \gamma_i)}  + \frac{(\lambda_i - \lambda_k) }{(z - \gamma_k)}   \right)      \prod_{j=0}^N (z - \gamma_j)\\
& = \sum_{i < k=0}^N \lambda_i^2 \lambda_k^2  \left(  
\frac{(\lambda_k - \lambda_i)^2 }{(z - \gamma_i)(z - \gamma_k)}     \right)       \prod_{j=0}^N (z - \gamma_j)\\
\end{align*}
Hence $P(z)$ is the numerator of \eqref{eq:JNR-rat-map}
 and thus   $\cR(z) = P(z)/Q(z)$. \end{proof}

\section{A criterion to be a JNR spectral curve}
\label{sec:grid}

In this section we give a criterion for when a spectral curve must be the JNR curve of some JNR data. 

\subsection{$(N, N)$ spectral curves} 
We need some normalisations for $(N, N)$ spectral curves.  Let $S$ be a real curve not intersecting the anti-diagonal defined
by $p(\eta, \zeta) = 0$.  Then  there is a $\lambda \neq 0$ such that 
$$
p(\eta, \zeta) =  \lambda \eta^N \zeta^N \overline{p(\frac{-1}{\bar \zeta}, \frac{-1}{\bar \eta})} 
$$
Moreover if we replace $q$ by $\mu q$ for some complex number $\mu$ then $\lambda$ is replaced by $\lambda \frac{\mu}{\bar \mu}$. 
We choose $\mu$ so that 
$$
p(\eta, \zeta) =  (-1)^N\eta^N \zeta^N \overline{p(\frac{-1}{\bar \zeta}, \frac{-1}{\bar \eta})} .
$$
 
It follows from this normalisation and the fact that $S$ does not intersect the anti-diagonal  that 
 $$
 p( \frac{-1}{\bar \zeta}, \zeta) \bar \zeta^N
$$
is either negative or positive and we can, without loss of generality assume that it is positive. 
So in summary we normalise our spectral curve defining polynomials so that 
\begin{equation}
\label{eq:norm}
p(\eta, \zeta) = (-1)^N \eta^N \zeta^N \overline{p(\frac{-1}{\bar \zeta}, \frac{-1}{\bar \eta})} \\
\quad\text{and}\quad  p( \frac{-1}{\bar \zeta}, \zeta) \bar \zeta^N  > 0 .
\end{equation}
It is straightforward to check that the  JNR polynomial defined in equation \eqref{eq:jnr-spectral} is normalised this way.

\subsection{Grids}

\begin{definition}
Let us define a {\em grid} in $Q \subset \PP_1 \times \PP_1$ to be a subset of the form 
$$
\cG = \{ (\frac{-1}{\bar \gamma_i}, \gamma_j ) \mid 0 \leq i \neq j \leq N \}
$$
for $\gamma_i$ a collection of $N+1$ distinct points in $\PP_1$.
\end{definition}

  Notice that a grid is a collection 
of $N(N+1)$ points which is real and does not intersect the anti-diagonal.  Sometimes we need to 
include the points we have excluded to avoid the anti-diagonal so we define $\bar \cG$ to be 
$$
\bar \cG = \{ (\frac{-1}{\bar \gamma_i}, \gamma_j ) \mid 0 \leq i ,  j \leq N \}
$$
which is $(N+1)^2$ points. 
We say that a curve $S$ in $\PP_1 \times \PP_1$ {\sl admits a grid} if there is a grid $\cG$  with $\cG \subset S$.

\subsection{JNR monopoles and grids}

Clearly a JNR spectral curve admits the grid determined by the poles in the JNR data.  In fact the converse is true as shown by the following proposition.
\begin{proposition} 
Let $S$ be a real curve of degree $(N, N)$ not intersecting the anti-diagonal and  admitting a grid $\cG$ determined by 
points $\gamma_0, \dots, \gamma_N$. Then 
there exist  positive real numbers $\lambda_0, \dots, \lambda_N$ such that $S$ is the spectral 
curve determined by the JNR data with  poles $\gamma_0, \dots, \gamma_N$ and weights $\lambda_0, \dots, \lambda_N$.
\end{proposition}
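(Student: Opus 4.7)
The plan is to identify the space of real bidegree $(N,N)$ polynomials vanishing on $\cG$ as the real span of the explicit JNR polynomials $p_i(\eta,\zeta) = \prod_{j\neq i}(\zeta-\gamma_j)(1+\eta\bar\gamma_j)$ for $i=0,\ldots,N$, and then to pin down the signs of the coefficients using the no-anti-diagonal condition together with the normalisation \eqref{eq:norm}. A direct check, essentially the one made in the paragraph after equation \eqref{eq:jnr-spectral}, shows each $p_i$ has bidegree $(N,N)$, vanishes on the grid $\cG$, and individually satisfies the reality identity $p_i(\eta,\zeta) = (-1)^N\eta^N\zeta^N\overline{p_i(-1/\bar\zeta,-1/\bar\eta)}$. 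Linear independence of the family $\{p_i\}$ is immediate by specialising to $\zeta = \gamma_k$: only $p_k$ has a nonzero restriction there.

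Next I would count dimensions. Enlarge $\cG$ to the full product $\bar\cG = A\times B$ with $A = \{-1/\bar\gamma_i\}_{i=0}^N$ and $B = \{\gamma_j\}_{j=0}^N$. Because $A$ and $B$ are $(N+1)$-point subsets of $\PP_1$, tensoring the usual Lagrange interpolation on each factor exhibits the evaluation map from bidegree $(N,N)$ polynomials to $\CC^{\bar\cG}$ as an isomorphism of $(N+1)^2$-dimensional vector spaces. Since $\cG$ is obtained from $\bar\cG$ by deleting the $N+1$ ``diagonal'' points $(-1/\bar\gamma_i,\gamma_i)$, the polynomials vanishing on $\cG$ form a subspace of dimension exactly $N+1$, and the $p_i$ provide a basis.

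Given the real polynomial $p$ defining $S$, write $p = \sum_{i=0}^N c_i p_i$ with $c_i \in \CC$. Since each $p_i$ is real and the expansion is unique, the antilinearity of the reality involution forces $c_i \in \RR$. To pin down signs I would compute, mimicking the identity displayed just before the normalisation \eqref{eq:norm},
\begin{equation*}
p(-1/\bar\zeta,\zeta)\,\bar\zeta^N \;=\; \sum_{i=0}^N c_i \prod_{j\neq i}|\zeta - \gamma_j|^2,
\end{equation*}
and then specialise to $\zeta = \gamma_k$. Only the $i=k$ term survives, giving $c_k\prod_{j\neq k}|\gamma_k-\gamma_j|^2$; the normalisation \eqref{eq:norm} makes the left side strictly positive, and the right-hand product is strictly positive because the $\gamma_j$ are distinct, hence $c_k > 0$. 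Setting $\lambda_k = \sqrt{c_k}$ identifies $S$ with the JNR spectral curve associated to the data $(\lambda_i,\gamma_i)$.

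The main obstacle is the dimension count in the second paragraph: this is where the geometry of the grid really enters, through the product structure $\bar\cG = A\times B$ that enables the clean Lagrange interpolation argument. A secondary subtlety is verifying that \emph{each} $p_i$ is individually real, rather than only some symmetric combination, since that is precisely what makes the realness of the coefficients $c_i$ fall out automatically in the third step; it is a small but slightly finicky bookkeeping with the exponents of $\eta$ and $\zeta$.
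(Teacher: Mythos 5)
Your proof is correct, and it arrives at exactly the same weights as the paper: your coefficient $c_k$, read off by evaluating at the diagonal point $(-1/\bar\gamma_k,\gamma_k)$, is precisely the paper's definition $\lambda_k^2=\bar\gamma_k^N\,q(-1/\bar\gamma_k,\gamma_k)/\prod_{j\neq k}|\gamma_k-\gamma_j|^2$. The organisation differs in one substantive way. The paper defines the $\lambda_i^2$ first, forms the JNR polynomial $p$ from \eqref{eq:jnr-spectral}, observes that $p$ and the normalised defining polynomial $q$ agree on the closed grid $\bar\cG$, and concludes $p=q$ by a sheaf-cohomology uniqueness statement for sections of $\cO_Q(N,N)$ restricted to $\bar\cG$. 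You instead prove that uniqueness statement by hand: the product structure $\bar\cG=A\times B$ and two-variable Lagrange interpolation show evaluation on $\bar\cG$ is an isomorphism, so the $(N,N)$ polynomials vanishing on the open grid $\cG$ form an $(N+1)$-dimensional space with the JNR building blocks $p_i=\prod_{j\neq i}(\zeta-\gamma_j)(1+\eta\bar\gamma_j)$ as a basis, and you then expand $q=\sum c_ip_i$. What your route buys is that it is elementary and self-contained (no appeal to cohomology, which in the paper is really shorthand for exactly the interpolation fact you prove), and it makes explicit two points the paper leaves implicit: that each $p_i$ is individually fixed by the reality involution, which together with antilinearity forces $c_i\in\RR$, and that the positivity normalisation in \eqref{eq:norm}, evaluated at $\zeta=\gamma_k$, forces $c_k>0$. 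What the paper's formulation buys is brevity and a statement in the sheaf-theoretic language that adapts to more general twistor situations. One shared, harmless caveat: both arguments are written in affine coordinates, so the case $\gamma_i\in\{0,\infty\}$ should strictly be handled in homogeneous coordinates or by a rotation, but this is not a gap specific to your argument.
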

\begin{proof}
Let $q$ define $S$ and normalise it as above. Define
 $$
 \lambda_i^2 =  \bar \gamma_i^N \frac{ q( \frac{-1}{\bar \gamma_i}, \gamma_i )}  { \prod\limits_{\substack{j= 0 \\ j \neq i}}^N |\gamma_i - \gamma_j |^2 }
 $$
and use it to define $p$ by equation \eqref{eq:jnr-spectral}. It follows that $q$ and $p$ agree on $\bar \cG$.  So from the exact sequence in sheaf cohomology 
$$
0 \to \cO_Q(-1, -1) \to \cO_Q(N, N) \to \cO_{\bar\cG}(N, N) \to 0 
$$
we deduce that $p = q$.
\end{proof}

Hence we have:
\begin{corollary}  \label{JNRgrid}
A spectral curve comes from JNR data if and only if it admits a grid.
\end{corollary}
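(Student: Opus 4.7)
The plan is to split the biconditional into its two implications. The nontrivial direction, that admitting a grid forces $S$ to be a JNR spectral curve, is precisely the Proposition immediately above: given a real $(N,N)$ curve $S$ disjoint from the anti-diagonal that contains the grid on poles $\gamma_0, \dots, \gamma_N$, one defines candidate weights by the explicit formula
\[
\lambda_i^2 = \bar\gamma_i^N \frac{q(-1/\bar\gamma_i, \gamma_i)}{\prod_{j \neq i}|\gamma_i - \gamma_j|^2}
\]
and invokes the vanishing of $H^1(Q, \cO(-1,-1))$ to promote agreement of the candidate JNR polynomial with the defining polynomial $q$ on the $(N+1)^2$ points of $\bar\cG$ to equality as global sections of $\cO(N,N)$. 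Thus this direction needs no further work beyond citing the Proposition; the only subtlety is observing that the $\lambda_i^2$ are genuinely positive real, which is exactly the content of the sign normalisation \eqref{eq:norm} imposed on $q$, so the data returned really is valid JNR data rather than a formal solution.

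For the opposite direction, I would check directly from \eqref{eq:jnr-spectral} that any JNR spectral curve contains the grid determined by its own poles. Fixing $i \neq j$ and evaluating at $(\eta,\zeta) = (-1/\bar\gamma_i, \gamma_j)$, the $k$-th summand becomes
\[
\lambda_k^2 \prod_{l \neq k}(\gamma_j - \gamma_l)\bigl(1 - \bar\gamma_l/\bar\gamma_i\bigr).
\]
The factor indexed by $l = j$ vanishes unless $j$ is excluded from the product, that is, unless $k = j$; likewise the factor indexed by $l = i$ vanishes unless $k = i$. A nonvanishing summand would therefore require $k = i$ and $k = j$ simultaneously, contradicting $i \neq j$. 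Hence every summand is zero, so $(-1/\bar\gamma_i, \gamma_j) \in S$ for all $i \neq j$ and the grid is contained in the curve.

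There is no serious obstacle: once the preceding Proposition is in hand the corollary reduces to the elementary bookkeeping above. The only conceptual point worth recording is that the pair (grid, spectral curve) determines the JNR data only up to an overall rescaling of the weights, which matches the remark made just after equation \eqref{eq:jnr-spectral} that such rescalings leave the monopole unchanged.
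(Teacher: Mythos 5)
Your proposal is correct and follows the paper's route exactly: the nontrivial direction is an immediate citation of the preceding Proposition (including the observation that the normalisation \eqref{eq:norm} guarantees the constructed $\lambda_i^2$ are positive), while the easy direction — which the paper dismisses with the word ``Clearly'' — you verify by the straightforward evaluation of \eqref{eq:jnr-spectral} at the grid points, and that computation checks out. No further comment is needed.
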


Consider the formal dimension count for grids and JNR data.  Notice that the vector space $H^0(Q, \cO(N, N)) $ has dimension $(N+1)^2$ so the space of curves has dimension $(N+1)^2 - 1 = N^2 + 2N$
real dimensions.   The closed grid has $(N+1)^2 = N^2 + 2N + 1$ points.  If we remove the intersection with the anti-diagonal
that leaves the grid which has $N^2 + N$ points in real pairs.  If we require the real curve to go through the grid that is $\frac{1}{2} (N^2 + N)$ constraints each contributing two real dimensions.  So we might reasonably expect the 
space of spectral curves admitting grids to be determined by $N^2 + 2N - (N^2 + N) = N$ parameters. This fits with $N+1$ weights less the overall scaling factor of $1$.

\section{Holomorphic spheres}   \label{sec:holsph}

In \cite{MurNorSin} with Singer we constructed a new invariant, particular to hyperbolic monopoles called the holomorphic sphere.
A hyperbolic monopole has a well-defined limit at infinity given by a
reducible connection $A_{\infty}$ over a two-sphere.  Denote by
$F_{A_{\infty}}$ the curvature of the reducible connection.  We showed that:
\begin{theorem} \cite[Theorem 1]{MurNorSin}  \label{th:sphere}
    An $SU(2)$ hyperbolic monopole $(A,\Phi)$ of charge $N$ is determined by
    a degree $N$ holomorphic embedding $q:\PP^1\rightarrow\PP^N$
    with the properties:
\begin{itemize}      
\item[(i)] $\Sigma=\{(\eta, \zeta)\in\PP^1\times\PP^1\ |\ \langle
    q(\hat{\eta}),q(\zeta)\rangle=0\}$ is the spectral curve of
    $(A,\Phi)$;\\
    
\item[(ii)] $q$ is uniquely defined up to the action of $U(N+1)$ on its image;\\
    
\item[(iii)] $F_{A_{\infty}}=q^*\omega$, for $\omega$ the
    Kahler form on $\PP_N$. 
\end{itemize}    
\end{theorem}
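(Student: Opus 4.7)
The plan is to build the holomorphic embedding $q$ from the asymptotic data of $(A,\Phi)$, verify the three properties, and deduce that $q$ determines the monopole up to gauge. The key input is the structure on the sphere at infinity $S^2_\infty \cong \PP^1$: the limiting Higgs field $\Phi_\infty$ splits $E|_{S^2_\infty}$ into its $\pm p$-eigenbundles $L_\pm$, and $\deg L_+ = N$ by the charge definition, so $L_+ \cong \cO_{\PP^1}(N)$ carries the abelian boundary connection.

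Set $V := H^0(\PP^1, L_+) \cong \CC^{N+1}$, equipped with the $L^2$ inner product coming from the Hermitian structure on $L_+$, and define $q \colon \PP^1 \to \PP(V^*) \cong \PP^N$ by sending $\zeta$ to the hyperplane of sections vanishing at $\zeta$. Since $L_+$ is very ample of degree $N$, this $q$ is automatically a holomorphic embedding of degree $N$ with $q^*\cO_{\PP^N}(1) \cong L_+$ as Hermitian line bundles. Property (ii) is then immediate: $V$ is canonically associated to the monopole, so only the choice of orthonormal basis is required to realise the embedding, and any two realisations differ by an element of $U(N+1)$. Property (iii) follows because the Fubini--Study form pulls back to the curvature of the Chern connection on $q^*\cO_{\PP^N}(1)$, which equals $F_{A_\infty}$ once $q^*\cO(1)$ is identified with $L_+$ with its boundary connection.

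The substantive step is property (i). I would show that $\langle q(\hat\eta), q(\zeta)\rangle$ equals, up to a non-vanishing factor, the scattering pairing along the oriented geodesic $\gamma$ from $\hat\eta$ to $\zeta$: concretely, the functional $q(\zeta) \in V^*$ is realised by evaluation of a section of $L_+$ at $\zeta$ via the solution of $(\nabla_\gamma - i\Phi)s = 0$ which decays at $\zeta$, and similarly for $q(\hat\eta)$. The Hermitian pairing of $q(\hat\eta)$ against $q(\zeta)$ then measures the overlap of the two scattering states along $\gamma$, and this overlap vanishes exactly when a common $L^2$ solution of $(\nabla_\gamma - i\Phi)s = 0$ exists on $\gamma$, i.e.\ exactly when $\gamma$ lies on the spectral curve $\Sigma$.

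The hardest part will be making this identification of the Hermitian inner product on $V$ with the scattering pairing rigorous, which requires careful asymptotic analysis of the Bogomolny equation near $S^2_\infty$ and compatibility between the boundary $L^2$ inner product, the Hermitian structure on $L_+$, and the scattering transform. Once property (i) is secured, the ``$(A,\Phi)$ is determined by $q$'' assertion follows from the classical reconstruction of a monopole from its spectral data: property (i) recovers $\Sigma$ from $q$, while the non-vanishing section of $L^{2p+N}$ on $\Sigma$ required by Atiyah's theorem can be built directly from $q$ by restricting the relevant bihomogeneous polynomial to generators of $Q$, together specifying $(A,\Phi)$ up to gauge.
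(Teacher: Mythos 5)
There is a genuine gap, and it sits exactly where the real content of the theorem lies. Your construction takes $V=H^0(\PP^1,L_+)$ with its $L^2$ inner product and the associated projective embedding $q\colon\PP^1\to\PP(V^*)$, and then asserts that (iii) holds ``because the Fubini--Study form pulls back to the curvature of the Chern connection on $q^*\cO_{\PP^N}(1)$, which equals $F_{A_\infty}$ once $q^*\cO(1)$ is identified with $L_+$.'' That identification is only an isomorphism of holomorphic line bundles; it does not carry the boundary Hermitian metric or connection. The metric that $q$ induces on $L_+$ is the Bergman (balanced) metric attached to the chosen $L^2$ inner product, and its curvature is in general \emph{not} $F_{A_\infty}$ -- indeed your argument never uses the Bogomolny equation, so it would ``prove'' (iii) for an arbitrary Hermitian metric on $\cO_{\PP^1}(N)$, which is false. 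The same problem infects (i): the Hermitian pairing $\langle q(\hat\eta),q(\zeta)\rangle$ in your construction is essentially the Bergman kernel of $(L_+,L^2)$, and no mechanism is given relating its vanishing to the existence of an $L^2$ solution of $(\nabla_\gamma-i\Phi)s=0$ along the geodesic from $\hat\eta$ to $\zeta$; the ``overlap of scattering states'' step is precisely the hard analytic/twistorial content and cannot be waved through. Note also that your closing reconstruction step (spectral curve plus a section of $L^{2p+N}$ determines the monopole \`a la Atiyah) is only available for half-integral mass, whereas the cited theorem holds for arbitrary mass.

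For comparison, the argument behind the cited result (Murray--Norbury--Singer, as used later in this paper via their Theorem~4) runs in the opposite direction: one starts from the normalised defining section $\psi$ of the spectral curve, of bidegree $(N,N)$, real, and strictly positive along the anti-diagonal. Positivity makes the coefficient matrix of $\psi$ a positive-definite Hermitian form, which yields a Gram-type factorisation $\psi(\eta,\zeta)=\langle q(-1/\bar\eta),q(\zeta)\rangle$ for a full degree-$N$ map $q\colon\PP^1\to\PP^N$; fullness gives uniqueness of $q$ up to $U(N+1)$, which is property (ii), and property (i) is then built in by construction. Property (iii) comes from identifying the boundary $U(1)$ curvature with $\partial\bar\partial\log$ of $\psi$ restricted to the anti-diagonal, i.e.\ with $\partial\bar\partial\log\lvert q\rvert^2=q^*\omega$; this identification of the boundary metric with $\psi(-1/\bar\zeta,\zeta)$ is where the monopole equations enter, and it is the step your proposal replaces by an unjustified appeal to the Chern connection. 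If you want to salvage your approach you would need to prove that the boundary metric on $L_+$ is exactly the Bergman metric of a distinguished inner product on $H^0(\PP^1,L_+)$ determined by the monopole -- which is in effect a restatement of the theorem, not a proof of it.
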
 

Here we have changed the notation using $(\eta, \zeta)$ for $(w, z)$ in \cite{MurNorSin}. Also $\<\ , \ \>$ is the Hermitian 
inner product, conjugate in the first factor.

Property (iii) of Theorem~\ref{th:sphere} can be used to prove that $F_{A_{\infty}}$ uniquely determines the monopole $(A,\Phi)$ up to gauge transformation.  It potentially gives one a way to study monopoles from a two-dimensional perspective much like the AdS-CFT correspondence which proposes a relationship between string theory on anti-de Sitter spacetime and conformal field theory on the boundary.  In general a degree $N$ holomorphic embedding $q:\PP^1\rightarrow\PP^N$ does not correspond to a hyperbolic monopole.  The first challenge then, to  implement such ideas, is to understand the space of  holomorphic spheres, or fields at infinity $F_{A_{\infty}}$.   This is achieved for JNR monopoles in the proposition below.

\begin{proposition}
\label{prop:holo-sphere}
The holomorphic sphere for a JNR monopole is given by 
\begin{equation}  \label{holsph}
q(z) = \left[ \frac{\lambda_0}{z - \gamma_0} , \frac{\lambda_1}{z - \gamma_1} , \cdots, \frac{\lambda_N}{z - \gamma_N}\right].
\end{equation}
\end{proposition}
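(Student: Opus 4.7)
My plan is to verify the characterising condition (i) of Theorem \ref{th:sphere} for the candidate map $q$ and then invoke the uniqueness statement in (ii) to conclude.

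First I would confirm that the formula \eqref{holsph} really defines a degree $N$ holomorphic embedding $\PP_1 \to \PP_N$. Clearing the common denominator $\prod_j (z-\gamma_j)$ rewrites the map as
$$q(z) = \bigl[\lambda_0 \prod_{j \neq 0}(z-\gamma_j), \, \ldots, \, \lambda_N \prod_{j \neq N}(z-\gamma_j)\bigr],$$
whose components are polynomials of degree $N$ with no common zero (the $\gamma_i$ are distinct and each $\lambda_i$ is positive), and which extends to $z = \infty$ as $[\lambda_0,\ldots,\lambda_N]$. Injectivity follows from the distinctness of the poles: an equality $q(z_1)=q(z_2)$ would force all $N+1$ ratios $(z_1-\gamma_i)/(z_2-\gamma_i)$ to agree, which is impossible for distinct $\gamma_i$ unless $z_1=z_2$.

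The heart of the proof is a direct computation of the Hermitian pairing. Using that the $\lambda_i$ are real and that $\overline{\hat{\eta}} = -1/\eta$, the pairing evaluates, on suitable lifts, to
$$\langle q(\hat\eta), q(\zeta)\rangle = \sum_{i=0}^N \frac{\lambda_i^2}{(-1/\eta - \bar\gamma_i)(\zeta - \gamma_i)} = -\eta \sum_{i=0}^N \frac{\lambda_i^2}{(1 + \eta\bar\gamma_i)(\zeta - \gamma_i)}.$$
Multiplying through by $-\eta^{-1}\prod_j (1+\eta\bar\gamma_j)(\zeta-\gamma_j)$ yields exactly the JNR polynomial $p(\eta,\zeta)$ of \eqref{eq:jnr-spectral}. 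Hence the zero set $\{\langle q(\hat\eta),q(\zeta)\rangle = 0\}$ agrees with the JNR spectral curve inside $Q$, establishing property (i).

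To finish, I recall that a hyperbolic monopole is determined up to gauge by its spectral curve, and that by property (ii) of Theorem \ref{th:sphere} the holomorphic sphere of a monopole is unique up to the $U(N+1)$ action. Composing these, the holomorphic sphere of the JNR monopole is determined up to $U(N+1)$ by its spectral curve; since our $q$ is a degree $N$ holomorphic embedding realising that spectral curve via (i), it must coincide with the holomorphic sphere of the JNR monopole, up to the $U(N+1)$ ambiguity allowed by the proposition. The only real nuisance in the argument is keeping straight the conjugations (conjugate-linearity in the first slot, combined with the antipodal map $\eta \mapsto -1/\bar\eta$); the algebra itself is elementary. A self-contained alternative, if one wished to avoid invoking the uniqueness of monopoles from their spectral curve, would be to verify property (iii) by directly computing $q^{*}\omega$ against the Fubini--Study form, but this is substantially more laborious and, given (ii), unnecessary.
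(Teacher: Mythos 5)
Your algebra is sound, and it is in fact the same calculation the paper performs: with the polynomial lift $\bigl(\lambda_i\prod_{j\neq i}(z-\gamma_j)\bigr)_{i=0}^N$ the pairing $\langle q(-1/\bar\eta),q(\zeta)\rangle$ equals $(-1)^N\eta^{-N}p(\eta,\zeta)$ for the JNR polynomial \eqref{eq:jnr-spectral}. The gap is in your concluding step. Property (ii) of Theorem \ref{th:sphere} says that the holomorphic sphere \emph{attached to a given monopole} is well defined up to $U(N+1)$; combined with the fact that the spectral curve determines the monopole, this makes the assignment (spectral curve) $\mapsto$ ($U(N+1)$-orbit of its holomorphic sphere) well defined. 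It does \emph{not} say that every degree $N$ embedding whose pairing vanishes on that curve lies in this orbit: at that point of your argument your candidate $q$ is not known to be the holomorphic sphere of \emph{any} monopole, so (i) and (ii) cannot simply be composed, and a priori two inequivalent embeddings could cut out the same curve. The missing ingredient --- exactly what the paper imports from \cite[Theorem 4]{MurNorSin} and the discussion there --- is that a \emph{full} degree $N$ map is determined up to $U(N+1)$ by the pairing $\langle q(-1/\bar\eta),q(\zeta)\rangle$ itself; this is why the paper verifies the exact normalised identity $\psi(\eta,\zeta)=\langle q(-1/\bar\eta),q(\zeta)\rangle$ with $\psi=(-1)^N\eta^{-N}p$ and $p$ normalised as in \eqref{eq:norm}, rather than merely matching zero sets.

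Your argument can be repaired, but two additions are needed. First, to pass from equality of zero sets to equality of pairings: the zero divisor of a bidegree $(N,N)$ form determines it only up to a constant $c\in\CC^*$; since both pairings are positive where $\hat\eta=\zeta$ (there they equal $|q(\zeta)|^2>0$), one gets $c>0$, which can be absorbed into the lift, and only then does the uniqueness statement from \cite{MurNorSin} apply. Second, that uniqueness requires $q$ to be \emph{full}, not merely injective --- for $N\geq 4$ a degree $N$ embedding of $\PP_1$ can lie in a proper linear subspace of $\PP_N$ --- so you must check, as the paper does, that $\frac{\lambda_0}{z-\gamma_0},\dots,\frac{\lambda_N}{z-\gamma_N}$ satisfy no nontrivial linear relation, which holds because the $\gamma_i$ are distinct and the $\lambda_i>0$; your injectivity check does not substitute for this. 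With these two points supplied, your proof closes and is essentially the paper's.
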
 
\begin{proof}
We need to connect with the  construction of the holomorphic sphere in \cite{MurNorSin}. Comparing the normalisations in \cite[Lemma 2.1]{MurNorSin} with those in equation \eqref{eq:norm} we see that
$\psi$ in \cite{MurNorSin} is related to $p$ by 
$$
\psi(\eta, \zeta) = \frac{ (-1)^N}{\eta^N} p (\eta, \zeta).
$$
Then we need to use  \cite[Theorem 4]{MurNorSin} which shows that  $q$ is determined by  $\psi$ and satisfies
\begin{equation}
\label{eq:sphere-q}
\psi(\eta, \zeta) = \langle q( \frac{-1}{\bar \eta}), q( \zeta) \rangle .
\end{equation}
In fact from the discussion there it follows that if $q \colon \PP_1 \to \PP_N$ is of degree $N$ and full, that is does not have its image in a proper
subspace, then equation \eqref{eq:sphere-q} determines $q$ up to the action of $U(N+1)$.  So we have
\begin{align*}
\psi(\eta, \zeta)  &= (\frac{-1}{\eta})^N \sum_{i=0}^N  \lambda_i^2  \prod_{\substack{j= 0 \\ j \neq i}}^N  (\zeta - \gamma_j ) ( 1 + \eta \bar \gamma_j) \\
  &= \sum_{i=0}^N  \lambda_i^2  \prod_{\substack{j= 0 \\ j \neq i}}^N  (\zeta - \gamma_j ) ( \frac{-1}{\eta} - \bar \gamma_j) \\
              & =   \sum_{i=0}^N \left(\frac{\lambda_i }{\frac{-1}{\eta} -  \bar \gamma_i}  \prod_{j= 0}^N ( \frac{-1}{\eta} - \bar \gamma_j) \right) \left( \frac{\lambda_i}{z - \gamma_i }  \prod_{j= 0}^N  (\zeta - \gamma_j ) \right)\\ 
              & =   \sum_{i=0}^N \overline{\left(\frac{\lambda_i }{\frac{-1}{\bar \eta} -  \bar \gamma_i}  \prod_{j= 0}^N ( \frac{-1}{\bar \eta} - \bar \gamma_j) \right)} \left( \frac{\lambda_i}{z - \gamma_i }  \prod_{j= 0}^N  (\zeta - \gamma_j ) \right)\\
              &= \langle q( \frac{-1}{\bar \eta}), q(\zeta) \rangle
\end{align*}
so that 
\begin{align}
\label{eq:holo-sphere}
q(z) &= \left[  \frac{\lambda_0}{z - \gamma_0} \left(\prod_{j= 0}^N  (z - \gamma_j )\right) ,  \cdots,   \frac{\lambda_N}{z - \gamma_N} \left(\prod_{j= 0}^N  (z - \gamma_j )\right)\right]  \\
&= \left[ \frac{\lambda_0}{z - \gamma_0} ,  \cdots, \frac{\lambda_N}{z - \gamma_N} \right] \nonumber. 
\end{align}
has degree $N$ and satisfies \eqref{eq:sphere-q}.  If $q$ is not full then there is some $(\alpha_0, \dots, \alpha_N)$ such that 
$$
\sum_{k=0}^N  \frac{\alpha_k \lambda_k }{(z - \gamma_k)} = 0
$$
which is not possible as  the $\gamma_k $ are distinct and  the $\lambda_k >0$.  Hence the result follows.
\end{proof}
 We shall see in Section \ref{sec:energy} that along with $F_{A_{\infty}}=q^*\omega$
this can be used to give a formula for the energy density of a JNR monopole at infinity.
 
\subsection{$SO(3)$ action on JNR monopoles.}
A consequence of Corollary~\ref{JNRgrid} and Proposition~\ref{prop:holo-sphere} is that the action by isometries of $SO(3)$ on $\HH^3$ preserves JNR monopoles.  To see this, note that the spectral curve is rotated by the ambient $SO(3)$ action on $\PP^1\times\PP^1$.  Hence the action of $SO(3)$ on the holomorphic sphere is given by $q(z)\mapsto q(g^{-1}\cdot z)$.  (Note that this only works for $SO(3)$ and fails for $PSL(2,\CC)$.)  Here $g^{-1}\cdot z=\frac{\bar{a}z-b}{\bar{b}z+a}$ for $g\in SU(2)/\pm I\cong SO(3)$.  Hence the $i$th term of $q$ becomes
$$\frac{\lambda_i}{z - \gamma_i}\mapsto\frac{\lambda_i}{\frac{\bar{a}z-b}{\bar{b}z+a} - \gamma_i}=\frac{\lambda_i(\bar{b}z+a)}{(\bar{a}-\bar{b}\gamma_i)z-(a\gamma_i+b)}=\frac{\frac{\lambda_i}{\bar{a}-\bar{b}\gamma_i}(\bar{b}z+a)}{z-\frac{a\gamma_i+b}{\bar{a}-\bar{b}\gamma_i}}.
$$
After dividing by the common factor of $\bar{b}z+a$ and replacing $\bar{a}-\bar{b}\gamma_i$ by $|\bar{a}-\bar{b}\gamma_i|$ via a diagonal $U(N+1)$ action on $\PP^N$ we get
$$q(z) = \left[ \frac{\lambda_0}{z - \gamma_0} , \frac{\lambda_1}{z - \gamma_1} , \cdots, \frac{\lambda_N}{z - \gamma_N} \right]\mapsto
\left[ \frac{\frac{\lambda_0}{|\bar{a}-\bar{b}\gamma_0|}}{z - \frac{a\gamma_0+b}{\bar{a}-\bar{b}\gamma_0}} , \frac{\frac{\lambda_1}{|\bar{a}-\bar{b}\gamma_1|}}{z - \frac{a\gamma_1+b}{\bar{a}-\bar{b}\gamma_1}} , \cdots, \frac{\frac{\lambda_N}{|\bar{a}-\bar{b}\gamma_N|}}{z - \frac{a\gamma_N+b}{\bar{a}-\bar{b}\gamma_N}} \right].
$$
Since the holomorphic sphere uniquely determines the monopole, we conclude that 
the $SO(3)$ action on JNR data is given by
$$\left(\begin{array}{cc}a&b\\-\bar{b}&\bar{a}\end{array}\right)\cdot \{\lambda_j,\gamma_j\}=\left\{\frac{\lambda_j}{|\bar{a}-\bar{b}\gamma_j|},\frac{a\gamma_j+b}{\bar{a}-\bar{b}\gamma_j}\right\}.
$$
In particular the $\frac12(N+1)(N+2)$ functions
$$
\frac{\lambda_i^2\lambda_j^2}{|1+\overline{\gamma}_i\gamma_j|^2}
$$
are invariant functions.  The orbit of $SO(3)$ is generically 3-dimensional for $N>1$ hence there are at least $\frac12(N+1)(N+2)+3-(3N+3)=\frac12(N-1)(N-2)$ relations among the invariant functions

\section{Projection to rational maps.}  \label{sec:sphrat}

The holomorphic sphere $q:\PP^1\rightarrow\PP^N$ uniquely determines the monopole $(A,\Phi)$ up to gauge transformation.  The rational map is defined by choosing a point on
the two-sphere at infinity $p\in S^2_\infty$, restricting gauge transformations to limit to the identity at $p$, i.e. framed gauge transformations, then scattering from $p$.  As described in Section~\ref{sec:JNR-rat-map}, the full group of gauge transformations reduces to $U(1)$ in the limit at the point $p$, so if we do not restrict to based gauge transformations at $p$, the rational map is well-defined up to rescaling by $e^{i\theta}$.

In \cite{MurNorSin}  we show in  Theorem 3 that, for each monopole, there is a  linear map  $\PP_N \to L_p = \PP_1 \subset \PP_N$ which composed with the 
holomorphic sphere $q$ gives the rational map up to rescaling. The point $p\in S^2_\infty$ is the scattering direction which is chosen to be
$\infty$ in Section~\ref{sec:JNR-rat-map}, and takes the form \eqref{eq:JNR-rat-map}.

\begin{lemma} 
There exists a unique $a = (a_0,a_1,...a_N)\in\CC^{N+1}$ such that the rational map is given by
$$
\cR(z)=\frac{\frac{a_0\lambda_0}{z-\gamma_0}+\frac{a_1\lambda_1}{z-\gamma_1}+...+\frac{a_N\lambda_N}{z-\gamma_N}}
{\frac{\lambda_0^2}{z-\gamma_0}+\frac{\lambda_1^2}{z-\gamma_1}+...+\frac{\lambda_N^2}{z-\gamma_N}} = \frac{ \langle \bar a , q(z) \rangle}{ \langle 
q(\infty), q(z) \rangle }
$$
\end{lemma}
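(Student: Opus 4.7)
The plan is to unwind both sides of the claimed identity using the explicit form of $q(z)$ from Proposition~\ref{prop:holo-sphere} and the explicit rational map \eqref{eq:JNR-rat-map}, then reduce existence and uniqueness of $a$ to a single injective linear map between finite-dimensional spaces of equal dimension.

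First I would verify the denominator. Since $q(z)=[\lambda_0/(z-\gamma_0),\ldots,\lambda_N/(z-\gamma_N)]$, taking the leading behaviour at $z=\infty$ in homogeneous coordinates gives $q(\infty)=[\lambda_0,\ldots,\lambda_N]$. Because the weights $\lambda_i$ are real, the Hermitian inner product (conjugate-linear in the first slot) gives
\[
\langle q(\infty),q(z)\rangle \;=\; \sum_{i=0}^{N}\frac{\lambda_i^2}{z-\gamma_i},
\]
and clearing the common factor $\prod_k(z-\gamma_k)$ recovers $Q(z)=\sum_i\lambda_i^2\prod_{k\neq i}(z-\gamma_k)$, which is exactly the denominator of \eqref{eq:JNR-rat-map} once we normalise $\sum_i\lambda_i^2=1$. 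Similarly $\langle\bar a,q(z)\rangle=\sum_i a_i\lambda_i/(z-\gamma_i)$, so the claimed identity for $\cR(z)$ amounts to finding $a\in\CC^{N+1}$ for which
\[
\sum_{i=0}^{N} a_i\lambda_i\prod_{k\neq i}(z-\gamma_k) \;=\; P(z),
\]
where $P(z)$ is the numerator of \eqref{eq:JNR-rat-map}.

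Next I would handle existence and uniqueness by a dimension count. The linear map $\Phi\colon\CC^{N+1}\to\CC[z]_{\le N}$ defined by $\Phi(a)(z)=\sum_i a_i\lambda_i\prod_{k\neq i}(z-\gamma_k)$ goes between two complex vector spaces each of dimension $N+1$. Injectivity is immediate: if $\Phi(a)=0$, evaluating at $z=\gamma_j$ yields $a_j\lambda_j\prod_{k\neq j}(\gamma_j-\gamma_k)=0$, and the positivity of $\lambda_j$ together with distinctness of the $\gamma_k$ forces $a_j=0$ for each $j$. (This is the same linear-independence observation used to prove fullness of $q$ in Proposition~\ref{prop:holo-sphere}.) Hence $\Phi$ is an isomorphism, so there is a unique $a$ with $\Phi(a)=P(z)$. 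Because $\deg P\le N-1$, the coefficient of $z^N$ in $\Phi(a)$ must vanish, i.e.\ $\sum_i a_i\lambda_i=0$, which automatically ensures $\cR(\infty)=0$ as required for a based rational map.

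There is no real obstacle here; the only point that warrants care is bookkeeping of conjugations and of the normalising constant $\sum\lambda_i^2$. An explicit formula for $a$ can be read off by matching residues of $P(z)/\prod_k(z-\gamma_k)$, giving $a_j=P(\gamma_j)/(\lambda_j\prod_{k\neq j}(\gamma_j-\gamma_k))$, and after substituting the closed form of $P(z)$ derived in the proof of the scattering proposition one recovers $a_j=\lambda_j(\gamma_j-\sum_i\lambda_i^2\gamma_i)$; but this computation is not needed for the lemma as stated, which only asserts existence and uniqueness.
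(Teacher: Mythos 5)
Your argument is correct, but it takes a different route from the paper. The paper's proof is purely computational: after the same normalisation $\sum_i\lambda_i^2=1$ and the same step of dividing numerator and denominator of \eqref{eq:JNR-rat-map} by $\prod_k(z-\gamma_k)$, it symmetrises the numerator using the identity $\frac{(\gamma_i-\gamma_j)^2}{(z-\gamma_i)(z-\gamma_j)}=\frac{\gamma_i-\gamma_j}{z-\gamma_i}+\frac{\gamma_j-\gamma_i}{z-\gamma_j}$ to produce the explicit coefficients $a_i=\sum_j\lambda_i\lambda_j^2(\gamma_i-\gamma_j)$ (equivalently your $a_i=\lambda_i(\gamma_i-\sum_j\lambda_j^2\gamma_j)$), with uniqueness left implicit; you instead prove existence and uniqueness abstractly by observing that $a\mapsto\sum_i a_i\lambda_i\prod_{k\neq i}(z-\gamma_k)$ is an injective, hence bijective, linear map $\CC^{N+1}\to\CC[z]_{\le N}$ (a Lagrange-interpolation argument, with injectivity from evaluation at the $\gamma_j$), and you correctly check the degree constraint forcing $\sum_i a_i\lambda_i=0$, i.e.\ $\cR(\infty)=0$. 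What each approach buys: yours handles the uniqueness assertion of the lemma cleanly and with minimal computation, whereas the paper's explicit partial-fraction calculation delivers the closed form of $a$ that is used immediately afterwards to rewrite $\cR(z)$ in terms of $\langle\gamma\rangle$ and to identify the projection onto the line spanned by $[\lambda_0,\dots,\lambda_N]$ and $[\bar\gamma_0\lambda_0,\dots,\bar\gamma_N\lambda_N]$; you only sketch that closed form via residues, which is fine for the lemma as stated but would need to be carried out to support the subsequent discussion.
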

\begin{proof}
To simplify the calculation assume that $\sum_{i = 0}^N \lambda_i^2 = 1$.  This just changes the 
definition of the $a_i$.  
If we divide through by $\prod_{k=0}^N (z - \gamma_k)$ we get the correct denominator and the numerator becomes
\begin{align*} 
\sum\limits_{i=0}^N \sum\limits_{j = i+1}^N  \lambda_i^2 \lambda_j^2 \frac{(\gamma_i - \gamma_j)^2}{(z - \gamma_i)(z - \gamma_j)} 
&= \frac{1}{2} \sum\limits_{i, j=0}^N  \lambda_i^2 \lambda_j^2 \frac{(\gamma_i - \gamma_j)^2}{(z - \gamma_i)(z - \gamma_j)} \\
&=\frac{1}{2} \sum\limits_{i, j=0}^N   \lambda_i^2 \lambda_j^2 \left(\frac{ \gamma_i - \gamma_j} {z - \gamma_i} + \frac{ \gamma_j - \gamma_i}{z-\gamma_j} \right)\\
&= \sum\limits_{i, j=0}^N   \lambda_i^2 \lambda_j^2 \frac{ \gamma_i - \gamma_j} {z - \gamma_i} \\
&= \sum\limits_{i=0}^N  \left(\sum_{j=0}^N \lambda_i \lambda_j^2 (\gamma_i - \gamma_j) \right) \frac{ \lambda_i} {z - \gamma_i}.
\end{align*}
This gives the required result with
$$
a_i = \sum_{j=0}^N \lambda_i \lambda_j^2 (\gamma_i - \gamma_j).
$$
Note that $a_0\lambda_0+a_1\lambda_1+...+a_N\lambda_N=0$ so that $\cR(\infty)=0$.
\end{proof}

Hence
$$\cR(z)=\frac{\frac{\gamma_0\lambda_0^2}{z-\gamma_0}+\frac{\gamma_1\lambda_1^2}{z-\gamma_1}+...+\frac{\gamma_N\lambda_N^2}{z-\gamma_N}}
{\frac{\lambda_0^2}{z-\gamma_0}+\frac{\lambda_1^2}{z-\gamma_1}+...+\frac{\lambda_N^2}{z-\gamma_N}}-\langle\gamma\rangle
$$
where
$$\langle\gamma\rangle=\frac{\sum_{j=0}^N\lambda_j^2\gamma_j}{\sum_{j=0}^N\lambda_j^2}.
$$
Thus $\cR(z)$ is achieved up to scale by projecting $q(z)$ onto the line $L\subset\PP_N$ spanned by
$$[\lambda_0,...,\lambda_N],\quad [\bar{\gamma}_0\lambda_0,...,\bar{\gamma}_N\lambda_N]
$$
which uses Gram-Schmidt as follows.  Set $v_1=(\lambda_0,...,\lambda_N)$.  Then $$v_2=(\bar{\gamma}_0\lambda_0,...,\bar{\gamma}_N\lambda_N)-\langle\bar{\gamma}\rangle v_1$$ 
is orthogonal to $v_1$ and projection is given by
$$ Pq(z)=\frac{\langle v_1,q(z)\rangle v_1}{\langle v_1,v_1\rangle}+\frac{\langle v_2,q(z)\rangle v_2}{\langle v_2,v_2\rangle}
$$
which is essentially $\cR(z)$ except for a scale factor of 
$$ \frac{\langle v_2,v_2\rangle}{\langle v_1,v_1\rangle}.
$$

Instead of scattering from $\infty\in S^2_\infty$, as one might expect, the rational map obrtained by scattering from $\gamma_i$ is simpler.  It is of the form:
$$ 
\cR_{\gamma_i}(z)=(z-\gamma_i)\sum_{j\neq i}\frac{a_{ij}\lambda_i}{z-\gamma_j}
$$
for some $a_{ij}\in\CC$ which is obtained by projection of $q$ onto $L_\zeta = \PP_1 \subset \PP_N$.

\section{The energy density at infinity}
\label{sec:energy}
From \cite{MurNorSin} we know that the energy density of the monopole at infinity is given by $q^*\omega$ where $\omega$ 
is the K\"ahler form on $\PP_n$ and $q \colon \PP_1 \to \PP_N$ the holomorphic sphere.  There are various ways to calculate this and we will use 
\begin{equation}
\label{eq:E-def}
E(q) = {\vert q \wedge \d q\vert^2 \over \vert q\vert^4}.
\end{equation}
Here we have lifted $q$ to a map into $\CC^{n+1}$ and we regard $q \wedge \d q$ as an element of $\bigwedge^2 \CC^{n+1}$ with 
the inner product induced from $\CC^{n+1}$.

If we let $e^0, \dots, e^N$ be the standard basis of $\CC^{N+1}$ then from \eqref{eq:holo-sphere} we can take
$$
q(z) = \sum_{i=0}^N  \frac{\lambda_i} {z - \gamma_i} e^i.
$$
so that 
$$
\partial q(z) = \sum_{i=0}^N  \frac{-\lambda_i} {(z - \gamma_i)^2} e^i.
$$
and thus 
$$
q \wedge \partial q = \sum_{0 \leq i < j \leq N} \left( \frac{\lambda_i}{(z - \gamma_i)^2} \frac{\lambda_j}{(z - \gamma_j)} 
            - \frac{\lambda_j}{(z - \gamma_j)^2} \frac{\lambda_i}{(z - \gamma_i)} \right) e^i \wedge e^j
$$
Substituting into the formula for the energy density \eqref{eq:E-def} we obtain
\begin{align}
\label{eq:energy-density}
E(q)(z) &= \frac{\sum\limits_{0 \leq i < j \leq N} \left| \frac{\lambda_i}{(z - \gamma_i)^2} \frac{\lambda_j}{(z - \gamma_j)} 
            - \frac{\lambda_j}{(z - \gamma_j)^2} \frac{\lambda_i}{(z - \gamma_i)} \right|^2 }
            { \left| \underset{i=0}{\overset{N}{\sum}}  \left| \frac{\lambda_i} {(z - \gamma_i)} \right|^2 \right|^2 } 
            \end{align}

A calculation shows that 
\begin{equation}
\label{eq:energy-max}
E(q)(\gamma_k) = \sum\limits_{0 \leq j \neq k \leq N} \left( \frac{\lambda_j}{\lambda_k}\right)^2 \frac{1}{| \gamma_k - \gamma_j |^2 }.
\end{equation}

\begin{example}   Take $N=3$, $\lambda_0 = \lambda_1 = \lambda_2 = \lambda_3 = 1$, $\gamma_0 = 0$, $\gamma_1 = i$,  $\gamma_2 = 2$ and 
$\gamma_3 = 1 - i$.
Then we obtain the plots in \ref{fig:e-3d} and \ref{fig:e-contour}.  

If the parameters are varied by lowering all the values of $\lambda_i$ except say the $k$th one uniformly towards $0$ then 
as discussed in Section \ref{sec:scattering} we expect that the monopole (at least its spectral curve) approaches
a monopole located near the points $\gamma_0, \dots, \gamma_{k-1}, \gamma_{k+1}, \gamma_N$ at infinity. 
Similarly we see here that $E(q)(z)$ approaches a sum of delta functions at $\gamma_0, \dots, \gamma_{k-1}, \gamma_{k+1}, \gamma_N$.

\begin{figure}
\begin{center}
\includegraphics[scale=0.6]{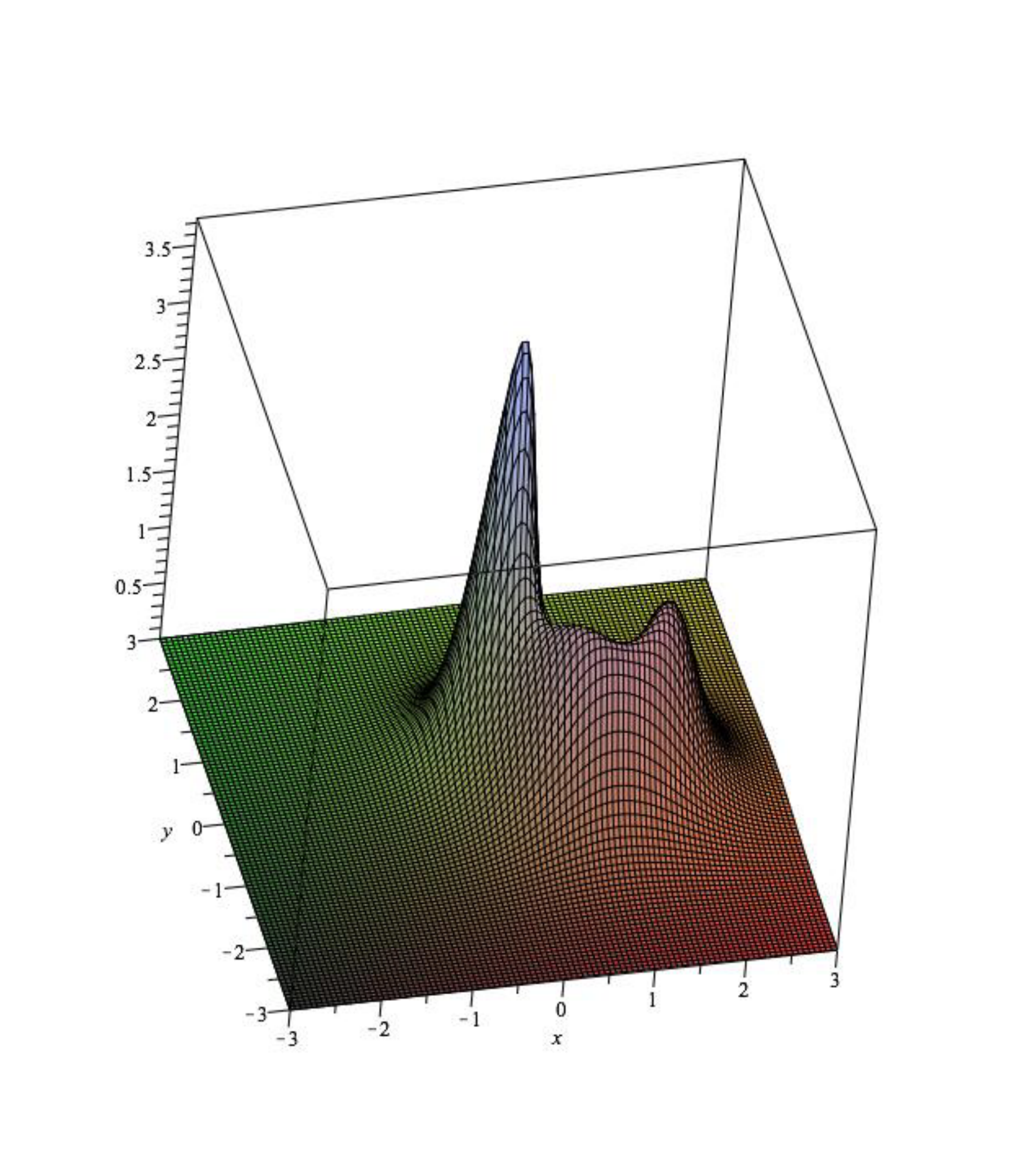}
\end{center}
\caption{Energy density at infinity for $N = 3$ JNR monopole}
\label{fig:e-3d}
\end{figure}

\begin{figure}
\begin{center}
\includegraphics[scale=0.5]{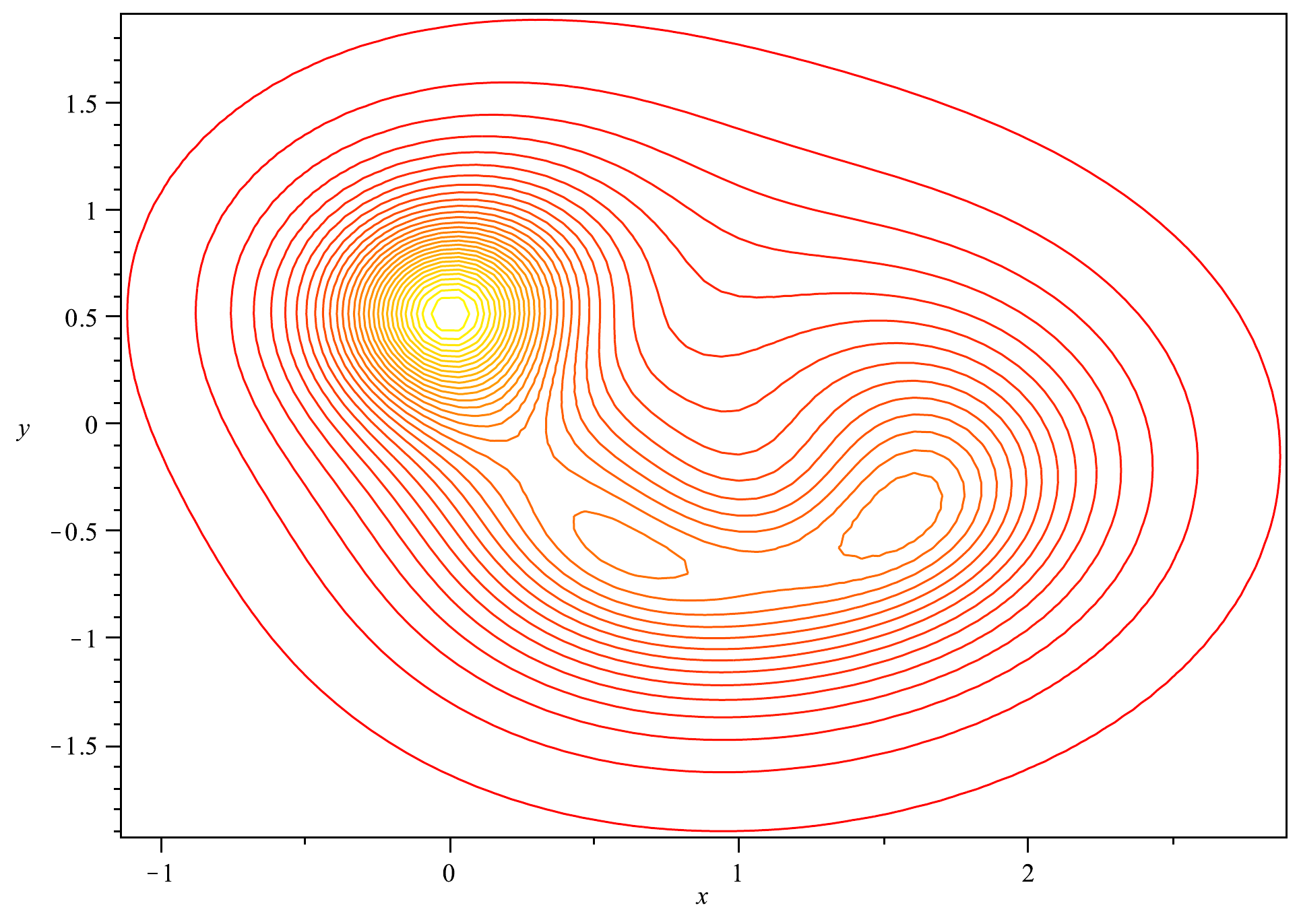}
\end{center}
\caption{Energy density at infinity for $N = 3$ JNR monopole}
\label{fig:e-contour}
\end{figure}
\end{example}

In principle it should be possible to calculate the information metric (see for example \cite{Mur})  induced from the energy density at infinity in terms of the JNR data.  

%:Bibliography

\end{document}